\begin{document}
\newtheorem{theo}{Theorem}[section]
\newtheorem{prop}[theo]{Proposition}
\newtheorem{lemma}[theo]{Lemma}
\newtheorem{exam}[theo]{Example}
\newtheorem{coro}[theo]{Corollary}
\theoremstyle{definition}
\newtheorem{defi}[theo]{Definition}
\newtheorem{rem}[theo]{Remark}


\newcommand{\Bb}{{\bf B}}
\newcommand{\Nb}{{\bf N}}
\newcommand{\Qb}{{\bf Q}}
\newcommand{\Rb}{{\bf R}}
\newcommand{\Zb}{{\bf Z}}
\newcommand{\Ac}{{\mathcal A}}
\newcommand{\Bc}{{\mathcal B}}
\newcommand{\Cc}{{\mathcal C}}
\newcommand{\Dc}{{\mathcal D}}
\newcommand{\Fc}{{\mathcal F}}
\newcommand{\Ic}{{\mathcal I}}
\newcommand{\Jc}{{\mathcal J}}
\newcommand{\Lc}{{\mathcal L}}
\newcommand{\Oc}{{\mathcal O}}
\newcommand{\Pc}{{\mathcal P}}
\newcommand{\Sc}{{\mathcal S}}
\newcommand{\Tc}{{\mathcal T}}
\newcommand{\Uc}{{\mathcal U}}
\newcommand{\Vc}{{\mathcal V}}

\newcommand{\ax}{{\rm ax}}
\newcommand{\Acc}{{\rm Acc}}
\newcommand{\Act}{{\rm Act}}
\newcommand{\ded}{{\rm ded}}
\newcommand{\Gm}{{$\Gamma_0$}}
\newcommand{\ID}{{${\rm ID}_1^i(\Oc)$}}
\newcommand{\PAP}{{${\rm PA}(P)$}}
\newcommand{\ACA}{{${\rm ACA}^i$}}
\newcommand{\RefP}{{${\rm Ref}^*({\rm PA}(P))$}}
\newcommand{\RefS}{{${\rm Ref}^*({\rm S}(P))$}}
\newcommand{\Rfn}{{\rm Rfn}}
\newcommand{\tar}{{\rm Tarski}}
\newcommand{\UNFA}{{${\mathcal U}({\rm NFA})$}}

\author{Nik Weaver}

\title [Constructive truth and circularity]
       {Constructive truth and circularity}

\address {Department of Mathematics\\
          Washington University in Saint Louis\\
          Saint Louis, MO 63130}

\email {nweaver@math.wustl.edu}

\date{\em April 13, 2010}

\begin{abstract}
We propose a constructive interpretation of truth which resolves
the standard semantic paradoxes.
\end{abstract}

\maketitle


\section{Heuristic concepts}
In intuitionism the law of excluded middle (LEM) is not accepted because the
assignment of truth values to sentences is seen as a kind of open-ended
process. Although the validity of any purported proof is supposed to be
decidable, the truth value of a given sentence may not be decidable because
one is not able to search through the infinite set of all potential proofs.
Thus the failure of LEM is related to the intuitionistic rejection of a
completed infinity.

Of course the validity of any proof within a given formal system is decidable.
But whether validity can really be considered a decidable property of proofs
broadly understood, outside of any particular formal system, is debatable.
The analogous claim is certainly not true of definitions. Indeed, suppose
we could decide whether any given finite string of words constructively
defines a natural number. Then in principle we would be able to unambiguously
determine which numbers are constructively defined by a string of
ten words by systematically examining all ten word long strings, and
consequently ``the smallest natural number not constructively definable
in ten words'' would be a valid constructive definition, which leads to a
contradiction.

We should conclude from this that definability is open-ended, but not in
the way intuitionists suppose truth to be open-ended, i.e., not merely
because one is unable to exhaustively search some infinite set. Rather, it
is open-ended in the sense that given any well-defined class of accepted
definitions we can always produce a new definition outside the family that
we would also accept. I will say that ``valid definition'' is a
{\it heuristic} concept \cite{W1}.
This is different from Dummett's notion of an indefinitely extensible
concept since he takes concepts to be decidable (\cite{Dum}, p.\ 441).

According to Troelstra it is ``natural'' to assume that the relation
``$c$ is a proof of $A$'' is decidable, and besides ``if we are in doubt
whether a construction $c$ proves $A$, then apparently $c$ does not prove
$A$ for us'' (\cite{Tro}, p.\ 7). But an identical argument could be
made in support of the claim that validity of definitions is decidable
(namely: if we are in doubt whether $c$ constructively defines a number
$n$, then $c$ does not constructively define $n$ for us). It
is not a good argument because it assumes that we can decide whether there
is any doubt about whether $c$ proves $A$. To the contrary, incompleteness
phenomena suggest that the general concept of a valid proof outside of any
particular formal system is not decidable. For if we can
accept, say, Heyting arithmetic (HA), then we can also accept a standard
arithmetization of the assertion that HA is consistent. This leads to a
stronger system whose validity we can also accept, and this process can be
iterated indefinitely, even transfinitely \cite {Tur, Fef}. It is hard
to see how the validity of proofs arbitrarily far up this hierarchy
could be decidable in any intuitionistically meaningful sense. Indeed, how
far we can get up the hierarchy depends on our ability to diagnose which
recursive total orderings of $\omega$ are well-founded, which is surely a
condition that is {\it not} intuitionistically decidable.

This example would not apply to a set-theoretic platonist who believes
that all sentences of first order number theory have well-defined
truth values. From his point of view, the use of proof in number theory
is necessary only because the infinite computations which could in
principle mechanically determine the truth value of any such sentence
are not available to us in practice. But this would not be the case
for sentences which quantify over proper classes; since there is no way,
even in principle, that one could perform an exhaustive search over a
proper class, for such sentences deductive proof reasserts itself as the
only means by which truth values can be known. Moreover, the set
theorist should also admit the meaningfulness of infinite formulas of
arbitrary cardinality. The problem that he then faces in deciding which
set theoretic axioms to accept is quite analogous to the intuitionist's
difficulty with number theory: any set of accepted axioms can always be
extended further. Specifically, given any set of axioms and deduction
rules one can assert the existence of a cardinal larger than all
cardinals whose existence is provable on that basis. So for the set
theorist too there does not seem to be any clear sense in which it is
decidable what would count as a valid proof. The collection of all true
infinite formulas is arguably a well-defined class, but membership in this
class is not decidable. Probably something analogous could be said in
regard to practically any coherent philosophy of mathematics.

I claim that {\it valid proof} should be understood as a heuristic concept,
and that this is independent of the question of the cogency of a completed
infinity (something I do in fact accept).

\section{Constructive reasoning}
Constructively, to assert that $A$ is true is to assert that $A$ has a
proof. (I discuss classical truth in section 9.) Whether we accept, e.g.,
infinitely long proofs, is not important here. All that matters is that
we understand truth in terms of being provable in some sense. The logical
symbols are interpreted as follows:
\begin{enumerate}
\item
A proof of $A \wedge B$ is a proof of $A$ and a proof of $B$.
\item
A proof of $A \vee B$ is a proof of $A$ or a proof of $B$.
\item
A proof of $A \to B$ is a construction that will convert any
proof of $A$ into a proof of $B$.
\item
A proof of $(\forall x)A(x)$ is a proof of $A(x)$ for arbitrary $x$.
\item
A proof of $(\exists x)A(x)$ is a proof of $A(x)$ for some $x$.
\end{enumerate}
What constitutes a proof of an atomic formula will vary depending on the
formal system in question. We interpret ``not $A$'' as an abbreviation of
the formula $A \to \bot$ where $\bot$ is some special atomic formula which
is false; for example, in arithmetical systems we may take $\bot$ to be
``$0 = 1$''.

The basic rules of natural deduction directly express the meanings of the
logical symbols:
\begin{enumerate}
\item
Given $A$ and $B$ deduce $A \wedge B$; given $A \wedge B$ deduce $A$ and
$B$.
\item
Given either $A$ or $B$ deduce $A \vee B$; given $A \vee B$, a proof of
$C$ from $A$, and a proof of $C$ from $B$, deduce $C$.
\item
Given a proof of $B$ from $A$ deduce $A \to B$; given $A$ and $A \to B$
deduce $B$.
\item
Given $A(x)$ deduce $(\forall x)A(x)$; if the term $t$ is free for $x$, given
$(\forall x)A(x)$ deduce $A(t)$.
\item
If the term $t$ is free for $x$, given $A(t)$ deduce $(\exists x)A(x)$; if
$y$ does not occur freely in $B$, given $(\exists x)A(x)$ and a proof
of $B$ from $A(y)$ deduce $B$.
\end{enumerate}
(See \cite{Pra} for a more precise, formal treatment of natural deduction.)

For example, our ability to deduce $A \wedge B$ from $A$ and $B$, and vice
versa, reflects the fact that having a proof of $A \wedge B$ is the same as
having a proof of $A$ and a proof of $B$. In the case of implication, we
argue as follows. If we have a proof $P$ of $B$ from $A$, then we can convert
any proof of $A$ into a proof of $B$ by concatenating it with $P$. Thus, we
have a construction that converts any proof of $A$ into a proof of $B$, i.e.,
we have a proof of $A \to B$. Conversely, if we have a proof of $A$ and a
construction that converts any proof of $A$ into a proof of $B$ then we have
a proof of $B$.

The deduction rules given above characterize Johansson's {\it minimal
logic} \cite{Joh}. It differs from intuitionistic logic in lacking ex
falso quodlibet. Minimal logic is appropriate for reasoning about
heuristic concepts.

\section{Ex falso quodlibet}
The ex falso principle states that anything follows from a falsehood:
$$\bot \to A.$$
The best universal
intuitionistic justification of this principle is that $\bot$ has
no proof, so vacuously we can convert any proof of $\bot$ into a proof
of $A$ \cite{vD}. However, if the notion of proof is heuristic then this
justification is defective. {\it The claim that $\bot$ has no proof
depends on the assumption that constructive reasoning is sound, but
if constructive reasoning is to include the ex falso principle --- whose
soundness is justified on the basis of that claim --- then this assumption
is circular.} In order to assure ourselves that constructive reasoning is
sound we must in particular establish that ex falso is sound, but showing
that ex falso is sound requires that we already know constructive reasoning
to be sound.

Another way to make the point is to say that we cannot affirm that
$\bot$ has no proof until we possess a clear concept of ``valid
proof'', and until we have determined whether ex falso is a legitimate
law to be used in proofs we apparently do not have a completely
clear concept of ``valid proof''.

This objection has no force if we take the validity of any proof to be
decidable. For then ``valid proof'' is a fixed, definite concept and it
is simply a fact that $\bot$ has no valid proof (though there is still
something dubious about the fact that our ability to decide that the proof
of ex falso is valid hinges on the premise that the validity of any proof
is decidable; this could be the basis of another argument that proof
validity cannot be decidable). But if proof is heuristic there is a
genuine circularity here because the act of adopting
the ex falso principle expands our notion of valid proof and according
to the justification given above the legitimacy of this move depends on the
correctness not just of previously accepted proofs, but also of newly
accepted proofs which themselves utilize ex falso.

One might suppose that this kind of circularity is benign, in the same
way that a sentence that asserts its own truth is benignly circular. But
again incompleteness phenomena show this to be a mistake. It is easy using
G\"odelian self-reference techniques to write down a formula $A$ in the
language of first order number theory that in a standard way
arithmetically expresses the
assertion that the formal system PA + $A$ consisting of the Peano axioms
plus $A$ is consistent. The formula $A$ embodies the same kind of
self-affirming circularity as the ex falso principle, but it is not benign:
we know from the second incompleteness theorem that PA + $A$ is not
consistent, and hence $A$ is false (indeed, provably false in PA).

Of course it is possible to justify ex falso in various (in practice
nearly all) situations. In settings where every statement has a definite
truth value it can be justified by interpreting implication classically
and appealing to truth tables. Or, since all we need is for
$\bot$ not to be provable, a proof of the equiconsistency of the
formal system in question with the same system minus the ex falso axiom
scheme may be enough to remove the vicious circle (and such a proof will
often be easy). Ex falso can also be justified for any formal system in
which we take $\bot$ to be ``$0 = 1$'' (or something similar) and in which
every atomic formula satisfies LEM; see Section 2.3 of \cite{W2}.

The ex falso principle is broadly valid, but it apparently does not have
a good universal non-circular justification.

\section{Inferring $A$ from ``$A$ is true''}
The inference `` `$A$ is true' implies $A$'' is, interpreted constructively,
similarly circular. For this statement to be true it must have a proof,
which would be a construction that converts any proof of ``$A$ is true''
into a proof of $A$. That is, we require a construction that converts any
proof that $A$ has a proof into a proof of $A$.

Since we are reasoning constructively, any proof that there exists a proof
of $A$ should, in principle, actually produce a proof of $A$. So we ought to
be able to convert a proof $P$ that $A$ has a proof into a proof $Q$ of $A$
simply by executing $P$. But just as in the case of ex falso quodlibet, the
success of this procedure depends on the assumption that constructive proofs
are sound. In order to justify `` `$A$ is true' implies $A$'' we must know
beforehand that all proofs that $A$ has a proof, {\it including proofs that
make use of the inference `` `$A$ is true' implies $A$''}, actually succeed
in producing a proof of $A$. As before, this is circular.

The inference ``$A$ implies `$A$ is true' '' is not constructively
problematic. Given a
proof $P$ of $A$, we can prove that $A$ has a proof by exhibiting $P$.
Thus we can convert any proof of $A$ into a proof that $A$ has a proof.
No assumption that all proofs are valid is involved. The reader is also
invited to refer back to Section 2 and assure himself that none of the
rules of natural deduction for minimal logic depend on assuming the
global validity of all proofs.

This last point perhaps needs emphasis. There is no circularity
in the other deduction rules. For example, the rule ``deduce $A \vee B$
from $A$'' requires that we be able to convert any proof of $A$ into a
proof of $A \vee B$, i.e., into a proof of $A$ or a proof of $B$. Of
course we can do this, since any proof of $A$ is already either a proof
of $A$ or a proof of $B$. No assumption about validity of proofs is
involved.

We must be careful to distinguish between the validity of an inference
and the validity of its conclusion. Certainly, if we are given an invalid
proof of $A$ we will produce an invalid proof of $A \vee B$;
in that sense the deduction rule does require the validity of all proofs
of $A$. But this does not affect the correctness of the inference.
Indeed, the inference ``if there is a $P$ then there is either a
$P$ or a $Q$'' is valid for {\it any} concepts $P$ and $Q$. The correctness
of this inference does not hinge on any particular property of these concepts.

(Modus ponens does require the validity of all proof constructions,
and since the notion of a proof is heuristic presumably the notion of
a construction that converts proofs into proofs is in general also
heuristic. However, the crucial point here is that adopting modus ponens,
while increasing our repertoire of proofs, does not increase our
repertoire of proof constructions, so there is no circularity in this.
The success of a given formal system typically depends only on the
validity of a limited class of proof constructions: we need to be able
to concatenate proofs for the sake of modus ponens, and axioms
that involve implication may presume additional specific proof
construction techniques. If we can affirm the validity of just these
kinds of proof constructions then we can justify modus ponens for
the system in question.)

\section{The liar paradox}
Consider the (strengthened) liar sentence $S$:

\centerline{This sentence is not true.}
\noindent It is apparently paradoxical because if $S$ is true then what it
asserts is the case; namely, $S$ is not true. But if $S$ is not true then
the assertion that it is not true is correct, and as this is just what $S$
asserts, $S$ must be true. In either case one is led to a contradiction.

A natural first reaction is to say that $S$ is neither true nor false,
but meaningless. This does not help because a sentence cannot be both
meaningless and true. Thus if it is meaningless then it is not true,
which, as we have just seen, leads to a contradiction.

The argument given above is not intuitionistically valid, since it
relies on the dichotomy ``$S$ is true or $S$ is not true''. But it can
easily be reformulated to avoid assuming LEM. Namely, we can argue as
before that {\it if} $S$ were true then it would not be true. Thus,
assuming that $S$ is true leads to a contradiction, which is exactly the
condition under which we may constructively affirm that $S$ is not true. So
without making any initial assumption about $S$ having or not having a truth
value, we can deduce that $S$ is not true. This then leads to a contradiction
just as before.

However, if truth is understood as heuristic, so that ``$A$ is true''
cannot be universally affirmed to entail $A$, then this argument fails. Given
the assumption that $S$ is true, for example, we cannot deduce that $S$ is not
true, only that we can prove that $S$ is not true. So contradictions are
blocked.

More formally, let $S$ be the formal liar proposition $S = T(\neg S)$
(``not-$S$ is true''). Then we have
$$S \to T(\neg S)$$
and since $A \to T(A)$ is generally valid we also have
$$S \to T(S).$$
We can therefore infer
$$S \to T(S \wedge \neg S)$$
and hence
$$S \to T(\bot).$$
But lacking $T(\bot) \to \bot$, we cannot infer $S \to \bot$, i.e., $\neg S$.

Starting from the hypothesis $\neg S$ also yields an informative result.
We have
$$\neg S \to T(\neg S)$$
(a special case of $A \to T(A)$) and
$$T(\neg S) \to S$$
(from the definition of $S$), so that
$$\neg S \to S.$$
Since also $\neg S \to \neg S$ this yields $\neg S \to \bot$, i.e.,
$\neg\neg S$. 

Informally: the liar proposition implies that there is a proof
of a contradiction ($S \to T(\bot)$), and additionally it is false
that the liar proposition is false ($\neg\neg S$). Formalizing the
liar sentence as $S' = \neg T(S')$ yields slightly different results;
in this case $\neg S'$ and $\neg\neg T(S')$ are provable. We can see
this by substituting $S'$ for $\neg S$ above.

If $T(A) \to A$ is not available one can still draw credible,
substantive conclusions about the liar sentence, but actual
contradictions are blocked.

\section{Axioms for self-applicative truth}
To demonstrate the consistency of the the kind of reasoning described above
we introduce a formal system.
The language is that of a propositional calculus with variables
$p_1, p_2, \ldots$. All formulas are built up from the propositional
variables and $\bot$ using $\wedge$, $\vee$, and $\to$. We fix an
enumeration $(A_i)$ of all formulas of the language
and interpret $p_i$ as the assertion that $A_i$ is true. For instance,
if $A_1 = \neg p_1$ then $p_1$ is a liar proposition. Or if $A_1 = p_2$
and $A_2 = \neg p_1$ then $p_1$ asserts that $p_2$ is true and $p_2$
asserts that $\neg p_1$ is true. The assumption that the $A_i$ are distinct
is not essential and could be removed at the cost of minor complication.

For any formula $A_i$ let $T(A_i)$ denote the corresponding propositional
variable $p_i$. We adopt the usual axioms and deduction rules of the
minimal propositional calculus (i.e., the intuitionistic propositional
calculus minus ex falso), together with the axioms

\centerline{$A \to T(A)$}
\centerline{$T(A) \wedge T(B) \leftrightarrow T(A \wedge B)$}
\centerline{$T(A) \vee T(B) \to T(A \vee B)$}
\centerline{$T(A \vee B) \wedge T(A \to C) \wedge T(B \to C) \to T(C)$}
\centerline{$T(A) \wedge T(A \to B) \to T(B)$,}
\noindent for all formulas $A$, $B$, and $C$. These axioms can all
be straightforwardly justified on the constructive interpretation
of the logical connectives described in Section 2. (For example: if
we have a proof of $T(A) \wedge T(B)$ then we have a proof of $T(A)$
and a proof of $T(B)$. That is, we have a proof that there is a proof
of $A$ and a proof that there is a proof of $B$. Combining them yields a
single proof that there is a proof of $A$ and a proof of $B$, i.e., a
proof that there is a proof of $A \wedge B$, i.e., a
proof of $T(A \wedge B)$. This shows that we can convert any proof of
$T(A) \wedge T(B)$ into a proof of $T(A \wedge B)$, which means that we
have a proof of $T(A) \wedge T(B) \to T(A \wedge B)$.)
We call the resulting formal system HT (Heuristic Truth). (The
system HT of course depends on the choice of the enumerating sequence
$(A_i$).)

HT is trivially consistent; just take $p_i$ to be true for all $i$ and
evaluate the truth of the $A_i$ classically. Then all the axioms of HT come
out true. More interesting is the fact that it is consistent at the
truth level, meaning that $T(\bot)$ is not a theorem (and hence no
contradiction can be proven true, since $A \wedge \neg A \to \bot$
and therefore $T(A) \wedge T(\neg A) \to T(A \wedge \neg A) \to T(\bot)$).

\begin{theo}
$T(\bot)$ is not a theorem of HT.
\end{theo}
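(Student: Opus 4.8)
The plan is to proceed semantically. I would set up algebraic (or Kripke) models for minimal propositional logic: a Heyting algebra $H$ together with a distinguished element $f$ interpreting $\bot$, not required to be the bottom element since ex falso is absent, with $\neg A$ read as $A \to f$. Minimal logic is sound for such models — every theorem evaluates to the top element $1$ and the deduction rules preserve value $1$ — so to show $T(\bot)$ is not a theorem it suffices to produce one such model in which all five truth axioms evaluate to $1$ while $T(\bot)$ does not. Writing $v$ for the induced valuation and $g(A) := v(T(A))$ (the value of the variable representing $A$), the axioms become conditions on $v$: reflection $v(A) \le g(A)$; the identity $g(A)\wedge g(B) = g(A\wedge B)$; the inequality $g(A)\vee g(B)\le g(A\vee B)$; and the transfer inequalities $g(A\vee B)\wedge g(A\to C)\wedge g(B\to C)\le g(C)$ and $g(A)\wedge g(A\to B)\le g(B)$ coming from disjunction elimination and modus ponens. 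The target is a valuation meeting these with $g(\bot)\ne 1$.

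First I would record two non-starters, to locate the difficulty. The most tempting choice is an exact self-referential fixed point $v(p_i)=v(A_i)$ for all $i$, i.e.\ $v(T(A))=v(A)$; this validates every axiom automatically, since reflection becomes an equality and the last two axioms reduce to the Heyting identities $(a\vee b)\wedge(a\to c)\wedge(b\to c)\le c$ and $a\wedge(a\to b)\le b$. But no such fixed point exists: the liar $A_1=\neg p_1$ forces $a=\neg a$ for $a=v(p_1)$, and in \emph{any} Heyting algebra $a=a\to f$ gives $a\le f$, hence $a\to f=1$, hence $a=1$ and $f=1$ — that is, $\bot$ would be valid. A two-valued (Boolean) valuation fails for a related reason: there reflection forces both $p_1$ and $\neg p_1$ into the set of formulas valued $1$, and modus ponens then forces $T(\bot)$ to hold as well. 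The moral is that the construction must use genuinely more than two truth values and must treat reflection as an \emph{inequality}, letting the liar sentence settle at an intermediate value.

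Accordingly I would work in the three-valued G\"odel chain $0<m<1$ (equivalently a two-world intuitionistic Kripke model, whose downset algebra is this chain), interpreting $\bot$ as $0$, and aim to build the valuation so that ungrounded sentences such as the liar receive the middle value $m$, while $\bot$, never being forced, keeps $g(\bot)\le m<1$. The hard part — and the step I expect to be the main obstacle — is producing this valuation in the presence of self-reference: because implication occurs with both polarities inside the formulas $A_i$, the operator that re-evaluates each $p_i$ as $A_i$ is \emph{not} monotone, so the Knaster--Tarski/Kripke least-fixed-point method, and Kleene-style information monotonicity, are unavailable. I would instead construct the set of ``true'' formulas by a stage (or Zorn-type maximality) argument, taking as generative clauses the four transfer axioms, which are positive and hence monotone, and verifying reflection and the biconditional axiom $T(A)\wedge T(B)\leftrightarrow T(A\wedge B)$ by hand; the essential invariant to maintain is that $\bot$ is never forced, so that its representing variable never reaches value $1$. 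Once such a valuation is secured, soundness yields $v(T(\bot))\ne 1$, and therefore $T(\bot)$ is not a theorem of HT.
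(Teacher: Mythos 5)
Your semantic setup is sound and your diagnosis of the obstacles is accurate, but the argument has a genuine gap at exactly the point you yourself flag as ``the main obstacle'': the valuation is never constructed. Everything before that point is preparatory (soundness of minimal logic over Heyting algebras with $\bot$ an arbitrary distinguished element, the impossibility of an exact fixed point $v(T(A))=v(A)$, the failure of two-valued models), and everything after it is routine; the mathematical content of the theorem is precisely the existence of an assignment $g$ of values to the variables $p_i$ satisfying all five schemes simultaneously with $g(\bot)\ne 1$. For that you offer only a plan: generate values monotonically from the transfer axioms and ``verify reflection by hand.'' But reflection, $v(A_i)\le g(A_i)$, is the one constraint that is non-monotone in $g$ --- raising $g$ on some variable can \emph{lower} $v(A_i)$ when that variable occurs in the antecedent of an implication inside $A_i$ --- so the feedback loop you correctly identified as defeating Knaster--Tarski reappears, untouched, inside the step you defer. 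A Zorn-type maximality argument does not obviously repair this: the constraint set mixes lower bounds (reflection, the forward transfer clauses) with upper bounds ($g(\bot)\ne 1$, $g(A\vee B)\wedge g(A\to C)\wedge g(B\to C)\le g(C)$, $g(A\wedge B)\le g(A)\wedge g(B)$), so a ``maximal'' assignment need not exist in any useful sense and need not satisfy reflection. As written, the claim that the invariant ``$\bot$ is never forced'' can be maintained is a restatement of the theorem, not a proof of it.

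It is worth seeing how the paper escapes this bind: it does not build a compositional model at all. It defines a two-valued function $\tau$ on formulas by an iterated syntactic reduction --- substitute already-settled variables by $\top$ or $\bot$, simplify, repeat --- assigns the default value $1$ to every formula whose reduction never stabilizes, and then verifies directly that every axiom gets value $1$, that $\{A:\tau(A)=1\}$ is closed under modus ponens, and that $\tau(T(\bot))=0$. This $\tau$ is emphatically not a homomorphism into any algebra: for the liar index one gets $\tau(p_1)=\tau(\neg p_1)=0$, which no compositional valuation can produce, and indeed $\tau$ satisfies the exact fixed-point identity $\tau(A)=\tau(T(A))$ that you correctly proved impossible for Heyting-algebra valuations --- possible here only because $\tau$ does not respect the connectives. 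The lesson is that unprovability needs only an MP-closed set of formulas containing the axioms and omitting $T(\bot)$, a strictly weaker object than the genuine three-valued model you are attempting. To complete your route you would have to either carry out the non-monotone fixed-point construction in full (e.g.\ by a transfinite groundedness stratification together with a proof that reflection survives every stage), or else weaken your target from ``model'' to ``MP-closed separating set,'' at which point you would likely rediscover something close to the paper's reduction argument.
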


\begin{proof}
This kind of consistency cannot be proven using classical models.
Indeed, in the example of a simple liar sentence discussed in the
previous section we have (as we showed there) $S \to T(\bot)$ and
$\neg S \to \bot$, hence $\neg S \to T(\bot)$, so that
$$(S \vee \neg S) \to T(\bot).$$
If HT contains a liar proposition then any classical model which satisfies
the law of excluded middle will not witness consistency at the truth level.

We verify consistency by assigning truth values in stages.
Introduce a new propositional symbol $\top$ representing truthhood and
say that a formula $A$ is {\it reduced} if $A = \top$, $A = \bot$,
or $A$ contains no occurrences of $\top$ or $\bot$. For any formula
$A$ define its {\it reduction} $A'$ inductively on the complexity of
$A$ by: $A' = A$ if $A$ is already reduced, and otherwise
\begin{enumerate}
\item
$(\top \wedge A)' = (A \wedge \top)' = A$
\item
$(\top \vee A)' = (A \vee \top)' = \top$
\item
$(\bot \wedge A)' = (A \wedge \bot)' = \bot$
\item
$(\bot \vee A)' = (A \vee \bot)' = A$
\item
$(\top \to A)' = A$
\item
$(A \to \top)' = \top$
\item
$(\bot \to A)' = \top$
\item
$(A \to \bot)' = \bot$ if $A \neq \bot$
\end{enumerate}
for any reduced formula $A$.

We now construct a function $\tau$ from the set of formulas of HT to
$\{0,1\}$ such that (a) $\tau(A) = 1$ for every axiom $A$ of HT, (b)
the set of $A$ such that $\tau(A) = 1$ is closed under modus ponens,
and (c) $\tau(T(\bot)) = 0$. This will show that $T(\bot)$
is not provable in HT. In the first step of
the construction, for all $i$ let $A_i^1 = A_i'$ be the reduction of $A_i$.
Then for each $i$ such that $A_i^1 = \bot$ define $\tau(A_i)
= \tau(p_i) = 0$, and for each $i$ such that $A_i^1 = \top$ define
$\tau(A_i) = \tau(p_i) = 1$. In the second step of the construction
we eliminate all propositional variables $p_i$ on which $\tau$ was defined
in the preceding step. Thus, for each $i$ let $A_i^2$ be the reduction
of $A_i^1$ after all occurrences of any $p_j$ with $\tau(p_j) = 0$ are
replaced by $\bot$ and all occurrences of any $p_j$ with $\tau(p_j) = 1$
are replaced by $\top$. Then for each $i$ such that $A_i^2 = \bot$ define
$\tau(A_i) = \tau(p_i) = 0$, and for each $i$ such that $A_i^2 = \top$
define $\tau(A_i) = \tau(p_i) = 1$. Continue this procedure indefinitely.
Observe that if $A_i^k = \top$ or $\bot$ then $A_i^{k'} = A_i^k$ for all
$k' > k$, i.e., truth values stabilize.
If $A_i^k$ never reduces to $\top$ or $\bot$ for any value of $k$ then we
define $\tau(A_i) = \tau(p_i) = 1$. This completes the definition of $\tau$.

All that remains is to prove that the function $\tau$ has the properties
(a) -- (c) claimed above. This is straightforward but tedious. For example,
to show that the set of $A$ such that $\tau(A) = 1$ is deductively closed,
suppose $\tau(A_i) = \tau(A_i \to A_j) = 1$. Let $k_i$ be the smallest
integer such that $A_i^{k_i} = \top$ or $\bot$ (or set $k_i = \infty$ if
there is no integer with this property), and define $k_j$ similarly.
Suppose $k_i \leq k_j$. Since $A_i^{k_i} = \top$ we must have
$(A_i \to A_j)^{k_i} = A_j^{k_i}$ by condition (5), and therefore (since
$k_i \leq k_j$)
$A_j^{k_j} = (A_i \to A_j)^{k_j} = \top$. This shows that $\tau(A_j) = 1$.
If instead $k_i > k_j$ then we must have $\tau(A_j) = 1$ since
otherwise $A_j^{k_j} = \bot$ and hence $(A_i \to A_j)^{k_j} = \bot$ by
condition (8), contradicting our assumption that $\tau(A_i \to A_j) = 1$.
So in either case we conclude that $\tau(A_j) = 1$. All other parts of
the claim are proven similarly.
\end{proof}

\section{Other paradoxes}
Grelling's paradox can be treated in a similar way. Formally, let
$T(P,x)$ stand for ``$P$ is true of $x$'', i.e., the predicate $P$ is
true when all free variables are replaced by $x$. As with the unary
truth predicate we can affirm $P(x) \to T(P,x)$ but not conversely. The
``heterological'' predicate can be formalized as $H(P) = T(\neg P, P)$.
We can then deduce
\begin{eqnarray}
H(H) &\to& T(H,H) \wedge T(\neg H,H)\cr
&\to& T(H \wedge \neg H, H)\cr
&\to& T(\bot, H)\cr
&\to& T(\bot)\cr
\end{eqnarray}
(using the principle that $T(P,x) \to T(P)$ if $P$ has no free variables)
and
$$\neg H(H) \to T(\neg H, H) \to H(H),$$
which implies $\neg \neg H(H)$. One can say the same thing about the
assertion that heterological is heterological that one can say about
the liar proposition: $H(H)$ implies that a falsehood is provable, and
it is false that $H(H)$ is false. Formalizing ``heterological'' as
$H'(P) = \neg T(P,P)$ yields the slightly different results
$\neg H'(H')$ and $\neg\neg T(H', H')$.

Berry's definability paradox mentioned in Section 1 is more complicated
to formalize since we have to reason about both numbers and expressions.
The essential ingredient is a predicate $D(A,n)$, ``$A$ defines $n$'',
interpreted as asserting that $A$ is true of $n$ and of no other number.
It will satisfy the axiom
$$A(n) \wedge [m \neq n \to \neg A(m)]\quad \to \quad D(A,n),$$
but the reverse implication will be problematic: without
assuming the global validity of all proofs, we cannot convert a proof
that $A$ is true of $n$ into a proof of $A(n)$.

\section{Reasoning consistently}
Formal systems for reasoning about truth typically face the difficulty
that, while semantic paradoxes are not formally derivable, nonetheless
the system itself is understood and analyzed in a metasystem from
which the paradoxes have not been exorcised. For example, Barwise and
Etchemendy consider truth to be situational and claim
that ``the Liar paradox shows that we cannot
in general make statements about the universe of all facts'' (\cite{BE},
p.\ 173), an assertion that seems to discredit itself and might even
count as a novel sort of liar sentence. Kripke, favoring a single
self-applicative truth predicate over a Tarskian hierarchy of truth
predicates, correctly notes that according to his theory
``Liar sentences are {\it not true} in the object language $\ldots$ but
we are precluded from saying this in the object language'' and states
``it is certainly reasonable to suppose that it is really the metalanguage
predicate that expresses the `genuine' concept of truth for the closed-off
object language'' (\cite{Kri}, pp.\ 714-715).

This criticism does not apply to the present account. Our notion of
constructive truth is univocal: there is no distinction between the truth
that is reasoned about formally (say, in the system HT) and the truth that
is discussed in the metasystem. We can prove in the metasystem that
the liar proposition is not provable in HT, but this does not entail
that it is not provable in general and so leads us to no definitive
conclusion about its actual truth.

But on the present account is the liar sentence {\it true} or {\it not true}?
Since we are reasoning constructively, this is not a forced dichotomy.
Perhaps the most we can say is that {\it if we reason consistently}
it is not true, which can be formalized as $\neg T(\bot) \to \neg T(S)$.

Unfortunately, we can actually prove the stronger assertion $\neg\neg T(\bot)$,
 i.e., $\neg T(\bot) \to \bot$. (This follows easily from $S \to T(\bot)$ and
$\neg\neg S$, both proven earlier; it implies $\neg T(\bot) \to \neg T(S)$
because $\bot \to \neg T(S)$.) That is, we can convert any proof
that we reason consistently into a proof of $\bot$. This does not
conflict with Theorem 6.1 because HT does not capture all forms of
acceptable reasoning (in particular, it does not capture the reasoning
employed in the proof of Theorem 6.1). Therefore the mere consistency
of HT at the truth level does not entail $\neg T(\bot)$.

I am arguing that the notion of a valid proof is heuristic, and any partial
formalization will always be capable of extension. What if we guarantee
consistency by simply demanding that no extension is to be accepted unless
it is consistent? In other words, we make a decision to reject any
extension of our notion of valid proof that leads to a proof of $\bot$.
We may certainly accept this prescription as an informal
guide. But if we were to adopt it formally in some way that would allow us
to infer that we do reason consistently, then we could deduce the
liar sentence and this would lead to a contradiction. So as a formal
principle that includes itself in its scope, the preceding proposal
disqualifies itself; it is viciously circular. {\it The only hope we have
of reasoning consistently is not to adopt principles that let us deduce
that we reason consistently.}

\section{Classical truth}
It could be objected that constructive truth as it has been characterized
here is not a notion of truth at all, on the grounds that a minimal
requirement of any account of truth is that it should satisfy Tarski's
biconditional `` `$A$ is true' if and only if $A$''. The response is that
as long as
we reason correctly it will actually be the case that we can prove $A$ if and
only if we can prove $T(A)$. But explicitly asserting $A \leftrightarrow T(A)$
is not acceptable because it implies a global assurance that our reasoning is
sound which we are not able to supply and which is in fact viciously circular.

To the contrary, I claim that in settings that involve a self-applicative
truth predicate, constructive truth is the only reasonable ``genuine''
notion of truth.

The classical intuition about truth is something like: $A$ is true if and
only if what $A$ asserts is actually the case. But this condition only makes
sense as applied to sentences whose meaning is already defined.
If a sentence refers to the concept of truth, then what it
means for this sentence to be the case depends on how we define truth, so
it is patently circular to define in one stroke the truth of all such
sentences in terms of
whether what they assert is the case. We cannot assume that there is a
fact of the matter about whether an assertion is the case, when that
assertion is framed in terms of concepts which have yet to be defined.

There are only two obvious remedies for this definitional quandry, if we
want to have a classical notion of truth that applies to sentences which
themselves contain a truth predicate. One is Tarski's idea of setting up a
hierarchy of truth predicates; in this case the sentence that asserts of
itself that it is not true${}_n$ will either be rejected as syntactically
illegitimate, or, if syntactically accepted, will be evaluated as not
true${}_n$ but true${}_{n+1}$. The other is Kripke's idea of generating a
single self-applicative truth predicate via transfinite recursion. But in
this case there will always be a gap between what we have defined as
``true'' in the object system and what we can prove to be the case in the
metasystem. Any step we take to extend the truth predicate to cover new
sentences will inevitably also
have the effect of rendering meaningful other sentences, not yet covered
by the truth predicate, which refer to the truth of the sentences whose
truth values have just been settled. The moral of this observation is just
that self-applicative truth is heuristic and cannot be understood as a
fixed well-defined concept. {\it It must be understood constructively.}

\bigskip
\bigskip


\begin{thebibliography}{aaaaaaaa}

\bibitem{BE}
J.\ Barwise and J.\ Etchemendy, {\it The Liar}, 1987.

\bibitem {Dum}
M.\ Dummett, {\it The Seas of Language}, 1993.

\bibitem{Fef}
S.\ Feferman, Transfinite recursive progressions of axiomatic theories,
{\it J.\ Symbolic Logic \bf 27} (1962), 259-316.

\bibitem{Joh}
I.\ Johansson, Der Minimalkalk\"ul, ein reduzierter intuitionistischer
Formalismus, {\it Compositio Math.\ \bf 4} (1937), 119-136.

\bibitem{Kri}
S.\ Kripke, Outline of a Theory of Truth, {\it Journal of Philosophy \bf 72}
(1975), 690-716.

\bibitem {Pra}
D.\ Prawitz, {\it Natural Deduction}, 1965.

\bibitem {Tro}
A.\ S.\ Troelstra, {\it Principles of Intuitionism}, 1969.

\bibitem {Tur}
A.\ Turing, Systems of logic based on ordinals, {\it Proc.\ London
Math.\ Soc.\ \bf 2} (1939), 161-228.

\bibitem {vD}
D.\ van Dalen, Intuitionistic logic, Chapter III.4 in
{\it Handbook of Philosophical Logic, Vol.\ III}, D.\ Gabbay and
F.\ Guenthner, eds., 1986, 225-339.

\bibitem {W1}
N.\ Weaver, The concept of a set, manuscript.${}^1$

\bibitem {W2}
{---------}, Axiomatizing mathematical conceptualism in third order
arithmetic, manu- script.\footnote{See
http://www.math.wustl.edu/$\sim$nweaver/conceptualism.html}
\end{thebibliography}
\end{document}